\documentclass{amsart}
\usepackage{color}
\usepackage{amssymb}

\newtheorem{theorem}{Theorem}[section]
\newtheorem{lemma}{Lemma}[section]
\newtheorem{proposition}{Proposition}[section]

\theoremstyle{definition}

\theoremstyle{remark}
\newtheorem{remark}[theorem]{Remark}

\numberwithin{equation}{section}

\setlength{\baselineskip}{6mm}

\renewenvironment{proof}{\medskip\noindent{\sc Proof:}}{\medskip}

\def\qed{\ifhmode\unskip\nobreak\fi\quad
  \ifmmode\square\else$\square$\fi}

\newcommand{\abs}[1]{\lvert#1\rvert}
\newcommand{\R}{\mathbb{R}}
\newcommand{\C}{\mathbb{C}}
\newcommand{\Z}{\mathbb{Z}}
\newcommand{\T}{\mathbb{T}}
\newcommand{\ov}{\overline}
\newcommand{\begeq}{\begin{equation}}
\newcommand{\stopeq}{\end{equation}}
\newcommand{\ep}{\epsilon}

\newcommand{\dis}{\displaystyle}
\newcommand{\pa}{\partial}
\newcommand{\ei}[1]{\mathrm{e}^{#1}}
\newcommand{\dd}[2]{\dis\frac{\pa #1}{\pa #2}}

\newcommand{\si}{\sigma}
\newcommand{\vfl}{\mathbf{L}}
\newcommand\supp{\text{\rm supp\,}}

\keywords{  complex vector fields, first integrals, theta function, global solvability, similarity principle }
\subjclass[2010]{35A01, 35C15, 35F05, 58J99}
\thanks{The second author was supported by FAPESP (grant 2017/00848-0). }

\begin{document}

\date{ \today  \ (version's date)}

\title{Class of hypocomplex structures on the two dimensional torus}

\author{Abdelhamid Meziani}
\address{Florida International University}
\email{meziani@fiu.edu}

\author {Giuliano Zugliani}
\address{Universidade Federal de S\~ao Carlos}
\email{giuzu@dm.ufscar.br}

\begin{abstract}
We study the H\"{o}lder solvability of a class of complex vector fields on the torus $\mathbb{T}^2$. We make use of the Theta function to associate a Cauchy-Pompeiu type integral operator. A similarity principle for the solutions of the equation $Lu=au+b\bar{u}$ is obtained.
\end{abstract}

\maketitle
\section{Introduction}
This paper deals with the solvability of a class of complex vector fields on the two-dimensional torus
$\T^2$. The main results are generalizations of those contained in the recent papers \cite{CDM} and \cite{CDM1}
where the focus was on solvability in domains of the plane $\R^2$.
Study of complex vector fields in $\T^2$, or on compact manifolds, was considered in many papers (see for instance \cite{BDG}, \cite{BCM}, \cite{BPZZ}, \cite{DM}, \cite{HZ}, \cite{Mez3})
under the
assumption of separation of variables: the coefficients of the induced equations are independent on certain variables. This allows
the use of partial Fourier series to carry out the analysis.
Our approach here is different and there is no need for the assumption of separation of variables and the structures are
not amenable to the use of Fourier series.

For the class of closed and locally solvable one forms $\omega=adx+bdy$ on $\T^2$ with orthogonal vector field
\[
L=b\dd{}{x}-a\dd{}{y}\, ,
\]
we associate a first integral in $Z$ on universal covering space $\R^2$. This function turns out to be
a global homeomorphism $Z:\, \R^2\longrightarrow\C$, sending a fundamental rectangle $R$ of the covering space $\R^2$ onto
a parallelogram $P_\tau$ in $\C$, with vertices $0$, $1$, $\tau$, and $1+\tau$ with $\textrm{Im}(\tau)>0$.
We use the Theta function $\Theta$ and the first integral $Z$ to associate a function
\[
M(p,s)=\frac{\Theta'\left(Z(p)-Z(s)+z_0\right)}{\Theta\left(Z(p)-Z(s)+z_0\right)}\, ,
\]
where $z_0=(1+\tau)/2$ is the unique zero of $\Theta$ in the parallelogram $P_\tau$.
This allows us to introduce a Cauchy-Pompeiu type operator in $\R^2$ given
\[
T_\omega g(p)=\frac{1}{2\pi i}\,\int_R M(p,s)g(s)\, d\mu_s\,.
\]
The properties of this operator are summarized in Theorem \ref{propertiesofT} and used to study
the solvability of $L$ on $\T^2$.

We prove in Theorem \ref{Lu=f} that if $f\in L^q(\T^2)$ with $q>2+\sigma$, where $\sigma$
is a positive number associated to $\omega$, then equation $Lu=f$ has a H\"{o}lder continuous solution in $\T^2$
if and only if $\dis\int_{\T^2}f=0$.
For  $A\in L^q(\T^2)$, we show in Theorem \ref{Lu=Au}, that equation $Lu=Au$ has a solution if and only if
$\dis\frac{1}{2\pi i}\,\int_{\T^2}A$ is in the lattice generated by $1$ and $\tau$ in $\C$.
Finally, in Theorem \ref{Lu=Au+Bbaru} we give a necessary and sufficient condition for the solvability of the equation $Lu=Au+B\ov{u}$ and deduce a similarity principle with  the solutions of $Lu=0$. As a consequence we
 show that any solution on $\T^2$ of $Lu=Au+B\ov{u}$ has the form $u=\exp(s)$ with $s$ continuous on $\T^2$.
 
\bigskip

This paper was written when the second author was visiting the Department of Mathematics and Statistics at Florida International University.
He is grateful and would like to thank the host institution for the support provided during the visit.

\section{ A class of hypocomplex structures}
We define a class of differential forms on the two dimensional torus $\mathbb{T}^2$ and associate a global first integral
on the universal covering space $\mathbb{R}^2$.
Let
\begeq
\omega =a(x,y)dx+b(x,y)dy
\stopeq
be a non-vanishing closed one-form on the two dimensional torus $\T^2$ where $(x,y)$ are the angular coordinates.
We assume that $a$ and $b$ are functions of class $C^{1+\varepsilon}$, $\varepsilon>0$, and that $\omega$ satisfies the following properties:
\begin{itemize}
\item[(a)] The set of non-ellipticity
\[
\Sigma =\left\{p\in\T^2;\ \omega(p)\wedge\ov{\omega(p)} =0 \right\}
\]
is a one-dimensional manifold;
\item[(b)] For each connected component $\Sigma_i$ of $\Sigma$, there exists a positive
number $\sigma_i$ such that for every $p\in \Sigma_i$
\[ 
\omega\wedge\ov{\omega} =\abs{\rho_i}^{\sigma_i}g\, dx\wedge dy
\] 
in a neighborhood of $p$, where $\rho_i$ is  defining function  of $\Sigma_i$ near $p$
and $g$ a non-vanishing function;
\item[(c)] The differential form $\omega$ is hypocomplex (see \cite{BCH} and \cite{Tre}). This is equivalent to
$\omega$ having locally open first integrals. That is, for every $p\in\mathbb{T}^2$,
there exist an open set $U\subset\mathbb{T}^2$ with  $p\in U$ and a function $\zeta\in C^{1+\ep}$
 such that $d\zeta\wedge\omega=0$ and
 $\zeta:\, U\,\longrightarrow\, \zeta(U)\,\subset\, \C$
is a homeomorphism.
\end{itemize}

\begin{remark}\label{abarb}
Assumption (c) implies local solvability of $\omega$ (Condition (P) of Nirenberg-Treves) which in turn implies that the function
$\textrm{Im}(a\ov{b})$ does not change sign (see \cite{BCH} and \cite{Tre}).
\end{remark}

As in {\cite{CDM}}, we can assume that there exist local coordinates $(s,t)$ near points $p \in \Sigma_i$ in which the differential form $\omega$ is a multiple
of
\begeq
ds+i\abs{t}^{\sigma_i}dt
\stopeq
and first integral $\zeta_i$
\[
\zeta_i =s+i\frac{t\abs{t}^{\sigma_i}}{\sigma_i+1}.
\]
We denote by $L$ be the orthogonal vector field of $\omega$:
\begeq
L=b(x,y)\dd{}{x}-a(x,y)\dd{}{y}\,.
\stopeq
Let $\dis \Pi :\, \R^2\,\longrightarrow\, \T^2$ be the covering map and denote by
$R$ the fundamental rectangle:
\begeq\label{R}
R=\left\{ (x,y)\in\R^2 ;\ 0\leqslant x \leqslant1,\ \ 0\leqslant y \leqslant1\, \right\}\,.
\stopeq
We consider the pullback
\[
\Omega =\Pi^\ast\omega =\Pi^\ast a(x,y)dx+\Pi^\ast b(x,y)dy\quad\textrm{and}\quad \vfl=\Pi^\ast b(x,y)\dd{}{x}-\Pi^\ast a(x,y)\dd{}{y}\,.
\]
Hence $\Pi^\ast a$ and $\Pi^\ast b$ are doubly periodic in $\R^2$:
\begeq\begin{array}{lll}
\Pi^\ast a(x+1,y) & =\ \Pi^\ast a(x,y+1)& =\ \Pi^\ast a(x,y)\, ;\\
\Pi^\ast b(x+1,y) & =\ \Pi^\ast b(x,y+1)& =\ \Pi^\ast b(x,y)\,.
\end{array}\stopeq
It should be noted that it follows from $d\omega =0$ that $\omega$ is locally exact and that
the function
\begeq\label{primitive}
Z(x,y)=\int_{(0,0)}^{(x,y)}\Omega
\stopeq
is a global first integral of $\Omega$. Furthermore it follows from the double
periodicity of $\Omega$
that there exist constants $C_1,\, C_2\,\in\, \C$ suct that for every $(x,y)\in\R^2$
\begeq
Z(x+1,y)=Z(x,y)+C_1\quad\textrm{and}\quad Z(x,y+1)=Z(x,y)+C_2\,.
\stopeq

\begin{lemma}
$\mathrm{Im}(C_1\ov{C_2})\ne 0\,.$
\end{lemma}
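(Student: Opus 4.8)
The plan is to argue by contradiction. Assume $\mathrm{Im}(C_1\ov{C_2})=0$; this says precisely that $C_1$ and $C_2$ are linearly dependent over $\R$ in $\C$, so there is $\lambda\in\C\setminus\{0\}$ with $\mathrm{Im}(\ov{\lambda}C_1)=\mathrm{Im}(\ov{\lambda}C_2)=0$ (take $\lambda=C_1$ if $C_1\ne 0$, else $\lambda=C_2$ if $C_2\ne 0$, else $\lambda=1$). Set $h=\mathrm{Im}(\ov{\lambda}Z):\R^2\to\R$. The period relations $Z(x+1,y)=Z(x,y)+C_1$ and $Z(x,y+1)=Z(x,y)+C_2$, together with the vanishing of $\mathrm{Im}(\ov{\lambda}C_j)$, show that $h$ is doubly periodic, so it descends to a continuous function $\tilde h:\T^2\to\R$; by compactness of $\T^2$ it attains a maximum at some $q\in\T^2$. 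I will show this is impossible.

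The key point is that $\tilde h$ is, near every point of $\T^2$, an open map to $\R$. Indeed, since $\Pi$ is a local diffeomorphism, in a small neighborhood $U\subset\T^2$ of any point the function $Z$ descends to a $C^1$ function $Z_U$ with $dZ_U=\omega$, hence $dZ_U\wedge\omega=0$ and $\tilde h|_U=\mathrm{Im}(\ov{\lambda}Z_U)$. Let $\zeta:U\to\zeta(U)\subset\C$ be a local first integral as in hypothesis (c) (shrinking $U$ if needed). By hypocomplexity (the characterization recalled after (c), see \cite{BCH} and \cite{Tre}), $Z_U=F\circ\zeta$ for some holomorphic $F$, and $F$ is non-constant, for otherwise $\omega=dZ_U=0$ on $U$, contrary to $\omega$ being non-vanishing. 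Hence $\ov{\lambda}F$ is holomorphic and non-constant, so it is an open map; composing with the homeomorphism $\zeta$ shows that $\ov{\lambda}Z_U=(\ov{\lambda}F)\circ\zeta:U\to\C$ is open, and then so is $\tilde h|_U=\mathrm{Im}(\ov{\lambda}Z_U):U\to\R$. An open map into $\R$ has no local maximum, contradicting the existence of $q$. Therefore $\mathrm{Im}(C_1\ov{C_2})\ne 0$.

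I expect the behavior of $Z$ over the non-ellipticity set $\Sigma$ to be the only real obstacle. Off $\Sigma$ the map $Z$ is already a local diffeomorphism — its Jacobian equals $-\mathrm{Im}(a\ov{b})$, which is non-zero there — so the openness of $\tilde h$ is elementary; it is precisely on $\Sigma$ that one must invoke hypocomplexity. If one wishes to avoid the $F\circ\zeta$ characterization, one can instead work at the maximum point $q\in\Sigma_i$ in the normal coordinates $(s,t)$ where $\omega=\mu\,(ds+i|t|^{\sigma_i}dt)$ with $\mu$ non-vanishing: there $\pa_s\tilde h=\mathrm{Im}(\ov{\lambda}\mu)$ and $\pa_t\tilde h=\mathrm{Re}(\ov{\lambda}\mu)\,|t|^{\sigma_i}$, and comparing the one-sided $t$-derivatives of $\tilde h(s_0,t)$ at $t=0$ forces $\mathrm{Re}(\ov{\lambda}\mu(q))=0$, while differentiating $\tilde h(s,0)$ in $s$ forces $\mathrm{Im}(\ov{\lambda}\mu(q))=0$, so $\mu(q)=0$, which is impossible.
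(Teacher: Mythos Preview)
Your proof is correct but takes a genuinely different route from the paper's. The paper argues by a direct computation: it evaluates $\int_R \Omega\wedge\overline{\Omega}$ in two ways, first as $\int_R 2i\,\Pi^\ast\mathrm{Im}(a\overline{b})\,dx\,dy$, which is nonzero because $\mathrm{Im}(a\overline{b})$ does not change sign (Condition~(P), a consequence of hypocomplexity, cf.\ Remark~\ref{abarb}), and second via Stokes' theorem as $\int_{\partial R} Z\,d\overline{Z}=C_1\overline{C_2}-C_2\overline{C_1}=2i\,\mathrm{Im}(C_1\overline{C_2})$. You instead argue by contradiction through a maximum principle: if the periods were $\R$-linearly dependent you would obtain a doubly periodic function $\tilde h=\mathrm{Im}(\overline{\lambda}Z)$ on $\T^2$, and you rule out interior extrema by using hypocomplexity directly---either via the holomorphic factoring $Z_U=F\circ\zeta$ (hence $\overline{\lambda}Z_U$ is open), or via an explicit one-sided derivative computation in the normal form $ds+i|t|^{\sigma_i}dt$ near $\Sigma$. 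The paper's approach is shorter and yields the useful identity $\int_R\Omega\wedge\overline{\Omega}=2i\,\mathrm{Im}(C_1\overline{C_2})$ as a byproduct; your argument is more topological, avoids the Stokes computation and the sign condition on $\mathrm{Im}(a\overline{b})$, and would apply verbatim to any closed non-vanishing hypocomplex form on a compact surface.
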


\begin{proof} Since $\textrm{Im}(a\ov{b})$ does not change sign (see Remark \ref{abarb}), then
\begeq\label{IMCC1}
\int_R\Omega\wedge\ov{\Omega} =\int_R 2i\, \Pi^\ast\textrm{Im}(a\ov{b})\, dxdy\, \ne 0\,.
\stopeq
Let $l_1=[0,\ 1]\times\{0\}$ and $l_2=\{0\}\times [0,\ 1]$ be sides of the rectangle $R$. Then using
properties of $Z$, we can write
\begeq\label{IMCC2}\begin{array}{ll}
\dis \int_R\Omega\wedge\ov{\Omega} & =\dis  \int_RdZ\wedge d\ov{Z} =\int_{\pa R}Zd{\ov{Z}}\\ \\
& =\dis \int_{l_1}(-C_2)d\ov{Z}+\int_{l_2}C_1d\ov{Z}=C_1\ov{C_2}-C_2\ov{C_1}\,.
\end{array}\stopeq
The conclusion follows from (\ref{IMCC1}) and (\ref{IMCC2}). \qed
\end{proof}

After replacing $\omega$ by $\dis\frac{\omega}{C_1}$ and, if necessary, after a change of variables
$\widetilde{x}=x,\ \widetilde{y}=-y$, we can assume that the primitive $Z$ satisfies
\begeq\label{Zquasiperiodic}\begin{array}{l}
 Z(x+1,y)=Z(x,y)+1\, ,\\
Z(x,y+1)=Z(x,y)+\tau\quad\textrm{with}\quad
\textrm{Im}(\tau) >0\,.
\end{array}\stopeq

\begin{proposition}
The primitive $Z:\, \R^2\,\longrightarrow\,\C$ given by \eqref{primitive} is  a global homeomorphism.
\end{proposition}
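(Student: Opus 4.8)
The plan is to prove that $Z$ is a \emph{proper local homeomorphism}, and then to invoke the standard topological fact that a proper local homeomorphism from a connected space onto a connected, locally compact, locally connected, simply connected space (here $\C$) is necessarily a global homeomorphism. Throughout I would use that $dZ=\Omega=\Pi^\ast\omega$ is nowhere vanishing, since $\omega$ is a non-vanishing one-form.

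\smallskip

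\emph{Step 1: $Z$ is a local homeomorphism.} If $p\notin\Sigma$ the structure is elliptic near $p$, so $\Omega\wedge\ov\Omega\ne 0$ there; equivalently $d\,\textrm{Re}\,Z$ and $d\,\textrm{Im}\,Z$ are $\R$-linearly independent, so the $C^1$ map $Z=(\textrm{Re}\,Z,\textrm{Im}\,Z)$ has invertible Jacobian and is a local diffeomorphism at $p$. If $p\in\Sigma_i$, I would pass to the normal coordinates $(s,t)$ recalled above, in which $\omega=\lambda\,(ds+i\abs{t}^{\sigma_i}dt)=\lambda\, d\zeta_i$ with $\lambda$ non-vanishing and $\zeta_i=s+i\,t\abs{t}^{\sigma_i}/(\sigma_i+1)$; note that $\zeta_i$ is an explicit homeomorphism onto its (open) image $V$ and that $d\zeta_i=ds+i\abs{t}^{\sigma_i}dt$ is nowhere zero. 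Since $\zeta_i$ is injective, $h:=Z\circ\zeta_i^{-1}\colon V\to\C$ is well defined and continuous, and $Z=h\circ\zeta_i$. Off the segment $\zeta_i(\{t=0\})$ the map $\zeta_i$ is a diffeomorphism and $dZ=\lambda\,d\zeta_i$ is a $(1,0)$-form for the complex structure defined by $\zeta_i$, so $h$ is holomorphic there; being continuous on $V$ and holomorphic off a smooth arc, $h$ is holomorphic on all of $V$ by the removable-singularity theorem. Finally, by the chain rule $dZ=h'(\zeta_i)\,d\zeta_i$, and comparing with $dZ=\lambda\, d\zeta_i\ne 0$ forces $h'(\zeta_i(p))=\lambda(p)\ne 0$; thus $h$ is a local biholomorphism near $\zeta_i(p)$ and $Z=h\circ\zeta_i$ is a local homeomorphism near $p$.

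\smallskip

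\emph{Step 2: $Z$ is proper, and conclusion.} Being continuous, $Z$ is bounded on the compact rectangle $R$ of \eqref{R}, say $\abs{Z}\leqslant M$ on $R$. Every $(x,y)\in\R^2$ equals $(x_0+m,\,y_0+n)$ for some $(x_0,y_0)\in R$ and $m,n\in\Z$, and the quasi-periodicity \eqref{Zquasiperiodic} gives $Z(x,y)=Z(x_0,y_0)+m+n\tau$, whence $\abs{Z(x,y)}\geqslant\abs{m+n\tau}-M$. Since $\textrm{Im}\,\tau>0$, the numbers $1$ and $\tau$ are $\R$-linearly independent, so $\abs{m+n\tau}\to\infty$ as $m^2+n^2\to\infty$; hence $\abs{Z}\to\infty$ at infinity, i.e.\ $Z$ is proper. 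A proper local homeomorphism onto $\C$ is a covering map; since $\C$ is simply connected and $\R^2$ is connected, this covering is one-sheeted, so $Z$ is a homeomorphism of $\R^2$ onto $\C$. (Alternatively, that the number of sheets equals $1$ can be read off from \eqref{IMCC1}--\eqref{IMCC2}: $Z$ descends by \eqref{Zquasiperiodic} to a local homeomorphism $\widehat Z\colon\T^2\to\C/(\Z+\Z\tau)$ of compact surfaces, and $\widehat Z^\ast$ of the area form is $\tfrac{i}{2}\,dZ\wedge d\ov Z=\tfrac{i}{2}\,\Omega\wedge\ov\Omega=\abs{\textrm{Im}(a\ov b)}\,dx\,dy$, so the degree of $\widehat Z$ is $\frac{1}{\textrm{Im}\,\tau}\int_{\T^2}\abs{\textrm{Im}(a\ov b)}\,dx\,dy=1$ after the normalization $C_1=1$, $C_2=\tau$.)

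\smallskip

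I expect the only delicate point to be Step~1 at the points of $\Sigma$, where the structure degenerates: there one must combine hypocomplexity, the explicit normal form $\zeta_i$, the fact that $\zeta_i$ is a homeomorphism but not a diffeomorphism along $\{t=0\}$, and a removability argument to see that $Z$ stays a local homeomorphism across $\Sigma$. Once that is in place, the properness of $Z$ and the passage to a global homeomorphism are soft.
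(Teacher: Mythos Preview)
Your proof is correct and takes essentially the same approach as the paper: establish that $Z$ is a local homeomorphism and is proper (via the quasi-periodicity \eqref{Zquasiperiodic} and $\mathrm{Im}\,\tau>0$), hence a covering of the simply connected space $\C$, hence a global homeomorphism. The only differences are cosmetic: the paper invokes the hypocomplexity assumption (c) directly for the local-homeomorphism step while your Step~1 spells it out via $Z=h\circ\zeta_i$ with $h'\ne 0$, and the paper phrases properness as ``$Z(\R^2)$ is closed'' using the same lattice-boundedness argument you give in Step~2.
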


\begin{proof}
First we show that $Z(\R^2)$ is a closed subset of $\C$. Suppose that
$\{(x_n,y_n)\}_n $ is a sequence in $\R^2$ such that $\{Z(x_n,y_n)\}_n $ converges to a point
$q\in\C$. For every $n$, we can find $\alpha_n,\, \beta_n\, \in \Z$ and $(x_n^0,y_n^0)\,\in R$ such that
\[
(x_n,y_n)=(x_n^0+\alpha_n,y_n^0+\beta_n).
\]
Hence
\begeq\label{convergenceZn}
Z(x_n,y_n)=Z(x_n^0,y_n^0)+\alpha_n+\beta_n\tau \,.
\stopeq
The sequence $\{(x_n^0,y_n^0)\}_n \subset R$ is bounded and so is the sequence $\{Z(x_n^0,y_n^0)\}_n$.
It follows then from the convergence of $Z(x_n,y_n)$, (\ref{convergenceZn}), and $\textrm{Im}(\tau)>0$ that
$\alpha_n$ and $\beta_n$ are bounded sequences in $\Z$. Therefore, the sequence $\{(x_n,y_n)\}_n $ is bounded in
$\R^2$ and consequently  $(x_n,y_n)$ converges to a point $p\in\R^2$ and so $Z(p)=q$.

Since $Z$ is also a local homeomorphism (see assumptions on $\omega$), then $Z(\R^2)$ is also open in $\C$.
Hence $Z(\R^2)=\C$. This means that $Z:\, \R^2\,\longrightarrow\,\C$ is a covering map and, therefore, it is
a homeomorphism since $\C$ is simply connected.
\qed
\end{proof}

\begin{remark}
It follows from the hypotheses on $\omega$ that the vector field $\mathbf{L}$ is
hypocomplex in $\R^2$ (see \cite{BCH}, \cite{Tre}). In particular if a function $U$ solves
$\mathbf{L}U=0$ in a region $S\subset\R^2$, then $U$ can be written as $U=H\circ Z$
with $H$ a holomorphic function in $Z(S)\subset\C$.
\end{remark}

\section{An integral operator via the Theta function}
We use the Theta function to construct a generalized Cauchy-Pompeiu operator for the vector field
$\vfl$ that enables us to construct solutions on the torus. For $\tau\in \C$ with $\textrm{Im}(\tau)>0$, consider the Theta function
\begeq
\Theta (z)=\sum_{m\in\Z}\ei{2\pi i m^2\tau}\ei{2\pi imz}\,.
\stopeq
The following properties of $\Theta$ will be used (for details see \cite{N}).

\begin{itemize}
\item[($\imath$)]  $\Theta (z+1)=\Theta (z)\,$;
\item[($\imath\imath$)] $\Theta(z+\tau)=\ei{-i\pi \tau -2\pi iz}\Theta (z)\,$;
\item [($\imath\imath\imath$)] The only zero of $\Theta(z)$ in the parallelogram $P_\tau$ with vertices
$0$, $1$, $\tau$, and $1+\tau$ is simple and is given by
\[ 
z_0=\frac{1+\tau}{2}\,.
\] 
The zeros of $\Theta$ in $\R^2$ are $z_{jk}=z_0+j+k\tau$ with $j,k\in\Z$.
\end{itemize}

\bigskip

For $p,\, s\,\in\R^2$, define the function $M(p,s)$ by
\begeq\label{M(p,s)}
M(p,s)=\frac{\Theta'\left(Z(s)-Z(p)+z_0\right)}{\Theta\left(Z(s)-Z(p)+z_0\right)}\,.
\stopeq
The function $M$ is meromorphic in $Z$ and satisfies the following

\begin{lemma}
For every $p\in\R^2$ and $s$ near $p$, we have
\begeq\label{Property1ofM}
M(p,s)=\frac{1}{Z(s)-Z(p)} + H(Z(s))
\stopeq
with $H$ a holomorphic function near $Z(p)$. Furthermore,
for each $j,\, k\,\in\Z$
\begeq\label{Property2ofM}
M(p,s+(j,k))=M(p,s)-2\pi i k\,.
\stopeq
\end{lemma}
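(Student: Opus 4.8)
The plan is to verify the two claimed identities directly from the definition \eqref{M(p,s)} and the properties $(\imath)$--$(\imath\imath\imath)$ of $\Theta$. For \eqref{Property1ofM}, fix $p$ and set $w=Z(s)-Z(p)+z_0$. As $s\to p$ we have $w\to z_0$, which by $(\imath\imath\imath)$ is a simple zero of $\Theta$; hence $\Theta(w)=(w-z_0)\,\Theta'(z_0)+O((w-z_0)^2)$ with $\Theta'(z_0)\ne 0$, and the logarithmic derivative $\Theta'(w)/\Theta(w)$ has a simple pole at $w=z_0$ with residue $1$. Writing $w-z_0=Z(s)-Z(p)$, this gives $M(p,s)=\dfrac{1}{Z(s)-Z(p)}+\widetilde H(w)$ where $\widetilde H$ is holomorphic near $z_0$; composing with the (holomorphic, by hypocomplexity) dependence of $w$ on $Z(s)$ yields the stated form with $H$ holomorphic near $Z(p)$. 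I would just note that $Z$ is a local homeomorphism and $\Omega$-integral, so near $p$ the map $s\mapsto Z(s)$ is a holomorphic coordinate and the decomposition transports cleanly.

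For \eqref{Property2ofM}, I would use the quasi-periodicity \eqref{Zquasiperiodic} of $Z$: $Z(s+(j,k))=Z(s)+j+k\tau$. Therefore the argument of $\Theta$ and $\Theta'$ in $M(p,s+(j,k))$ is $w+j+k\tau$ where $w=Z(s)-Z(p)+z_0$. Applying property $(\imath)$ $j$ times removes the integer shift with no effect, so it suffices to treat the shift by $k\tau$. Applying $(\imath\imath)$ $k$ times gives $\Theta(w+k\tau)=\exp\!\big(\lambda_k(w)\big)\Theta(w)$ for an explicit affine-in-$w$ exponent $\lambda_k(w)=-i\pi k^2\tau-2\pi i k w$ (the $k^2$ coming from summing $0,1,\dots,k-1$ of the linear terms produced at each step); differentiating, $\Theta'(w+k\tau)=\exp(\lambda_k(w))\big(\Theta'(w)+\lambda_k'(w)\Theta(w)\big)$ with $\lambda_k'(w)=-2\pi i k$. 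Dividing, the exponential factors cancel and one is left with
\[
M(p,s+(j,k))=\frac{\Theta'(w)+(-2\pi i k)\Theta(w)}{\Theta(w)}=M(p,s)-2\pi i k,
\]
as claimed.

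There is essentially no serious obstacle here; the lemma is a bookkeeping exercise. The one point deserving a line of care is the induction producing the exponent $\lambda_k$: one must track that each application of $(\imath\imath)$ shifts the argument, so the linear term picks up an extra $\tau$ at each stage, and confirm the resulting coefficient of $\tau$ is $k^2$ and the coefficient of $w$ is $k$; but since only the $w$-derivative of $\lambda_k$ enters the final ratio, even a small error in the constant term $-i\pi k^2\tau$ is harmless for \eqref{Property2ofM} — only $\lambda_k'(w)=-2\pi i k$ matters. I would therefore present the $k=1$ case in full, remark that the general case follows by an immediate induction, and extract $\lambda_k'(w)=-2\pi i k$ as the only datum needed.
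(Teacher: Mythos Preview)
Your proposal is correct and follows essentially the same route as the paper: for \eqref{Property1ofM} you both use that $z_0$ is a simple zero of $\Theta$ so that $\Theta'/\Theta$ has residue $1$ there, and for \eqref{Property2ofM} you both differentiate the quasi-periodicity relation and take the quotient. The paper simply writes down the iterated identity $\Theta(z+j+k\tau)=e^{-i\pi k\tau-2\pi ikz}\Theta(z)$ and differentiates, while you carry out the induction more explicitly; as you rightly observe, only the $w$-derivative $\lambda_k'(w)=-2\pi ik$ of the exponent is needed, so any discrepancy in the constant term is irrelevant. (One small wording caution: $s\mapsto Z(s)$ is a local homeomorphism, not a holomorphic coordinate in the usual sense near points of $\Sigma$; but since $w$ is an affine function of $Z(s)$, your conclusion that $H$ is holomorphic in $Z(s)$ stands without that remark.)
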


\begin{proof}
Property (\ref{Property1ofM}) follows directly from the definition (\ref{M(p,s)}) of $M$ and
the properties of the $\Theta$ function.
To verify (\ref{Property2ofM}), notice that since
\[
\Theta (z+j+k\tau)=\ei{-i\pi k\tau -2\pi i kz}\Theta (z)
\]
then
\[
\Theta' (z+j+k\tau)=\ei{-i\pi k\tau -2\pi i kz}\left[ \Theta' (z)-2\pi ik\Theta (z)\right]\,.
\]
Therefore
\[\begin{array}{ll}
M(p,s+(j,k))& =\dis\frac{\Theta'\left(Z(s+(j,k))-Z(p)+z_0\right)}{\Theta\left(Z(s+(j,k))-Z(p)+z_0\right)}\\
& =\dis\frac{\Theta'\left(Z(s)-Z(p)+z_0+j+k\tau\right)}{\Theta\left(Z(s)-Z(p)+z_0+j+k\tau\right)}\\
& = \dis\frac{\Theta'\left(Z(s)-Z(p)+z_0\right)}{\Theta\left(Z(s)-Z(p)+z_0\right)} -2\pi i k = M(p,s)-2\pi ik\,.\qed
\end{array}\]
\end{proof}

Now we use the function $M$ as the kernel of the operator $T_\omega $ defined defined for $g\in L^q(\R^2)$ by
\begeq\label{operatorT}
T_\omega g(p)=\frac{1}{2\pi i}\int_RM(p,s)g(s)d\mu_s
\stopeq
where $d\mu_s$ is the density measure in $\R^2$.
A simple version of this operator was considered in \cite{Mez} and \cite{Mez2} for other classes of vector fields, and more recently in \cite{CDM} and \cite{CDM1}.

Let
\begeq\label{sigma}
\sigma =\max_{1\leqslant i\leqslant N}\sigma_i\, ,
\stopeq
where $\sigma_i$ is the positive number associated with the connected component $\Sigma_i$ of the
characteristic set $\Sigma$ given in hypothesis (b) on $\omega$ 
and where $N$ is the number of connected components of $\Sigma$.

It follows from property (\ref{Property1ofM}) of $M$ and from Theorem 16 in \cite{CDM} that for $g\in L^q(\R^2)$ with $q>2+\sigma$, we have
\begeq \label{alpha}
T_\omega g\in C^\alpha(R),\ \ \textrm{with}\ \
\alpha =\frac{2-p-\mu}{p}\, ,\ p=\frac{q}{q-1}\, , \ \textrm{and}\
\mu=\frac{\sigma}{\sigma+1}\,.
\stopeq

\begin{proposition}\label{testfunction}
Let $\phi\,\in\, C^\infty_0(\R^2)$. Then for every $p\in R$ we have
\begeq\label{valuesofphi}
\sum_{j,k\in\Z}\phi(p+(j,k)) =\frac{-1}{2\pi i}\,\int_{\R^2} M(p,s)\mathbf{L}\phi(s)\, d\mu_s.
\stopeq
\end{proposition}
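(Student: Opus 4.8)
The plan is to recognize \eqref{valuesofphi} as a Cauchy-Pompeiu formula for the vector field $\vfl$, proved by excising small disks around the singularities of $M(p,\cdot)$ and integrating by parts. Everything rests on two elementary remarks. First, since $dZ=\Om=\Pi^\ast a\,dx+\Pi^\ast b\,dy$, for any $C^1$ function $G$ on $\R^2$ one has $dG\wedge dZ=(\vfl G)\,dx\wedge dy$, so that $(\vfl\phi)\,d\mu_s=d(\phi\,dZ)$. Second, writing $M(p,s)=\mathcal{M}_p(Z(s))$ with $\mathcal{M}_p(w)=\Theta'(w-Z(p)+z_0)/\Theta(w-Z(p)+z_0)$ meromorphic in $w$, we get $dM(p,\cdot)=\mathcal{M}_p'(Z(\cdot))\,dZ$, hence $dM(p,\cdot)\wedge dZ=0$ and therefore $\vfl M(p,\cdot)=0$ off its poles; by property $(\imath\imath\imath)$ of $\Theta$, by \eqref{Zquasiperiodic}, and by the injectivity of $Z$, the only singularities of $s\mapsto M(p,s)$ are the lattice translates $p+(j,k)$, $j,k\in\Z$, and near each of them $M(p,s)=\dis\frac{1}{Z(s)-Z(p+(j,k))}+(\text{holomorphic in }Z(s))$ by \eqref{Property1ofM} combined with \eqref{Property2ofM}.

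Suppose first that $p\notin\Si$. Fix a large open disk $U$ with $\supp\phi\subset U$, and for $\ep>0$ small put $B_\ep^{jk}=Z^{-1}\big(\{|w-Z(p+(j,k))|<\ep\}\big)$; these disks are pairwise disjoint and, since $p\notin\Si$, disjoint from $\Si$, so each $\pa B_\ep^{jk}$ is a smooth Jordan curve. Set $\Om_\ep=U\setminus\bigcup\ov{B_\ep^{jk}}$, the union taken over the finitely many $(j,k)$ with $p+(j,k)\in\supp\phi$. On $\Om_\ep$ the one-form $\eta=M(p,\cdot)\,\phi\,dZ$ is $C^1$ --- its coefficients are products of the $C^1$ function $M(p,\cdot)$, the smooth $\phi$, and the $C^{1+\ep}$ coefficients of $\Om$, and $\eta$ vanishes identically near every unexcised pole --- and by the two remarks $d\eta=M(p,\cdot)\,(\vfl\phi)\,dx\wedge dy$. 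Green's theorem, together with $\phi\equiv0$ on $\pa U$, yields
\[
\int_{\Om_\ep}M(p,s)\,\vfl\phi(s)\,d\mu_s=-\sum_{j,k}\int_{\pa B_\ep^{jk}}M(p,s)\,\phi(s)\,dZ(s),
\]
the circles positively oriented. Parametrizing each by $w=Z(s)$ --- which is orientation preserving, since $dZ\wedge d\ov Z=2i\,\Pi^\ast\mathrm{Im}(a\ov b)\,dx\wedge dy$ with $\Pi^\ast\mathrm{Im}(a\ov b)<0$ off $\Si$ after the normalization $\mathrm{Im}(\tau)>0$ --- the holomorphic part of $M$ contributes $O(\ep)$ and the simple pole contributes $2\pi i\,\phi(p+(j,k))$ as $\ep\to0$, by continuity of $\phi\circ Z^{-1}$. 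Meanwhile the left-hand side tends to $\dis\int_{\R^2}M(p,s)\,\vfl\phi(s)\,d\mu_s$ by dominated convergence, since $\vfl\phi$ is bounded with compact support and $M(p,\cdot)\in L^1_{\mathrm{loc}}(\R^2)$, the integrability underlying \eqref{alpha}. Hence $\dis\int_{\R^2}M(p,s)\,\vfl\phi(s)\,d\mu_s=-2\pi i\sum_{j,k}\phi(p+(j,k))$, which is \eqref{valuesofphi} for $p\notin\Si$.

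It remains to remove the restriction $p\notin\Si$ by continuity. The left-hand side of \eqref{valuesofphi} is a locally finite sum of continuous functions of $p$. The right-hand side, by \eqref{Property2ofM}, equals a finite sum of shifted copies of $T_\omega(\vfl\phi)$ plus a constant, hence --- as $\vfl\phi\in L^q(\R^2)$ for every $q$ --- is H\"older continuous on $R$ by \eqref{alpha}. Since $R\setminus\Si$ is dense in $R$, \eqref{valuesofphi} holds for all $p\in R$. The one genuinely delicate point is this last step: along $\Si$ the map $Z$ is merely $C^1$ with vanishing Jacobian, so the disk $B_\ep^{00}$ centered at a point $p\in\Si$ need not have a rectifiable boundary and the Stokes argument of the second paragraph cannot be run verbatim there; the continuity of both sides --- for the right-hand side, exactly the content of the estimates from \cite{CDM} behind \eqref{alpha} --- is what closes the gap.
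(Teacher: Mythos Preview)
Your argument is correct and follows essentially the same approach as the paper's proof: excise from $\R^2$ the preimages under $Z$ of small disks centered at the lattice translates of $Z(p)$, apply Green's theorem to $M(p,\cdot)\,\phi\,dZ$, and compute the boundary contributions as residues. The paper carries this out directly for $p$ in the interior of $R$ without separating the case $p\in\Sigma$; your two-step treatment (first $p\notin\Sigma$, then continuity via the $C^\alpha$ estimate \eqref{alpha}) and your explicit check of orientation are refinements rather than a different route.
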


\begin{proof}
Let $p_\ast $ be a point in the interior of the rectangle $R$, $z_\ast=Z(p_\ast)$ and $D_\ep$ be the disc with
center $z_\ast$ and radius $\ep >0$. We take $\ep$ small enough so that $D_\ep\,\subset\, R$. Set
\[
K_\ep^{jk}=Z^{-1}(D_\ep +j+k\tau)\quad\textrm{and}\quad
\R^2_\ep =\R^2\backslash \bigcup_{j,k\in\Z}K^{jk}_\ep\,.
\]
Using the fact that $\mathbf{L}M(p_\ast,s)=0$ in $\R^2_\ep$ and $\supp (\phi)$ is compact, then
 Green's Theorem applied to the function $M(p_\ast,s)\vfl \phi(s)$ in a domain containing $\supp (\phi)$ gives
\begeq\label{Greenforphi}
\int_{\R^2_\ep} M(p_\ast,s)\vfl \phi(s)\,d\mu_s =-
\sum_{j,k\in\Z}\int_{\pa K^{jk}_\ep} M(p_\ast,s)\phi(s)\, dZ(s)\,.
\stopeq
Properties (\ref{Property1ofM}) and (\ref{Property2ofM}) together with a change of variables in the integrals over
$\pa K^{jk}_\ep$ give
\begin{multline}\label{integralKepsilon}
\dis\int_{\pa K^{jk}_\ep}\!\!\! M(p_\ast,s)\phi(s)\, dZ(s) =\dis\int_{\pa K^{00}_\ep} \!\!\! M(p_\ast,s+(j,k))\phi(s+(j,k))\, dZ(s)\\ \\
=\dis\int_{\pa D_\ep}\!\!\! \left( M(p_\ast,Z^{-1}(\zeta))-2\pi ik\right)\phi\left(Z^{-1}(\zeta)+(j,k)\right)\, d\zeta\\ \\
 =\dis\int_0^{2\pi}\!\!\! \left[ \frac{1}{\ep\ei{i\theta}}+H(\ep\ei{i\theta})-2\pi ik\right]\phi\left( Z^{-1}(z_\ast+\ep\ei{i\theta})+(j,k)\right)
i\ep\ei{i\theta}d\theta\,.
\end{multline}
Formula (\ref{valuesofphi}) follows from (\ref{Greenforphi}) and (\ref{integralKepsilon}) by taking $\ep\rightarrow 0\,.$
\qed
\end{proof}

We have the following theorem:

\begin{theorem}\label{propertiesofT}
For every  function $P\in L^q(\R^2)$ with $q>2+\sigma$,
the function $T_\omega P \in C^\alpha(\R^2)$ \emph(with $\alpha$ given in \eqref{alpha}\emph)
satisfies
\begin{itemize}
\item[($\imath$)] $T_\omega P(x+1,y)=T_\omega P(x,y)\,$;
\item[($\imath\imath$)] $T_\omega P(x,y+1)=T_\omega P(x,y)-\dis\int_RP(s)d\mu_s\,$; and
\item[($\imath\imath\imath$)] If in addition $P$ is doubly periodic, then  $\vfl T_\omega P=P\,.$
\end{itemize}
\end{theorem}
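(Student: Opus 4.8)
The plan is to extract $(\imath)$ and $(\imath\imath)$ from the way the kernel $M(p,s)$ transforms under the translations $p\mapsto p+(1,0)$ and $p\mapsto p+(0,1)$, and to prove $(\imath\imath\imath)$ by testing the identity $\mathbf{L}T_\omega P=P$ against $\phi\in C^\infty_0(\R^2)$ and reducing it to Proposition \ref{testfunction}, applied with the two arguments of $M$ interchanged.

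For $(\imath)$: by \eqref{Zquasiperiodic} we have $Z(x+1,y)=Z(x,y)+1$, and since $\Theta$ — and hence $\Theta'$ — is $1$-periodic, the arguments of $\Theta$ occurring in $M(p+(1,0),s)$ and in $M(p,s)$ differ by an integer, so $M(p+(1,0),s)=M(p,s)$ for every $s$; integrating over $R$ gives $T_\omega P(x+1,y)=T_\omega P(x,y)$. For $(\imath\imath)$: by \eqref{Zquasiperiodic}, $Z(x,y+1)=Z(x,y)+\tau$, so the argument of $\Theta$ in $M(p+(0,1),s)$ is that in $M(p,s)$ shifted by the period $\tau$; differentiating $\Theta(z+\tau)=\ei{-i\pi\tau-2\pi iz}\Theta(z)$, exactly as in the computation leading to \eqref{Property2ofM}, yields $\Theta'(z+\tau)/\Theta(z+\tau)=\Theta'(z)/\Theta(z)-2\pi i$, hence $M(p+(0,1),s)=M(p,s)-2\pi i$ for every $s$, and therefore
\[ T_\omega P(x,y+1)=\frac{1}{2\pi i}\int_R\big(M(p,s)-2\pi i\big)P(s)\,d\mu_s=T_\omega P(x,y)-\int_R P(s)\,d\mu_s . \]
(In particular $(\imath)$ and $(\imath\imath)$, combined with $T_\omega P\in C^\alpha(R)$ from \eqref{alpha}, give $T_\omega P\in C^\alpha(\R^2)$, since on each translate $R+(j,k)$ the function $T_\omega P$ agrees, up to an additive constant, with its restriction to $R$.)

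For $(\imath\imath\imath)$, assume in addition that $P$ is doubly periodic. By the above, $T_\omega P$ is locally Hölder, hence locally integrable, so $\mathbf{L}T_\omega P$ is a well-defined distribution. Because $d\omega=0$, the zeroth-order coefficient $\partial_x(\Pi^\ast b)-\partial_y(\Pi^\ast a)$ of the formal transpose of $\mathbf{L}$ vanishes, so $\mathbf{L}$ is formally skew-symmetric, and for every $\phi\in C^\infty_0(\R^2)$,
\[ \langle\mathbf{L}T_\omega P,\phi\rangle=-\int_{\R^2}T_\omega P(p)\,\mathbf{L}\phi(p)\,d\mu_p=-\frac{1}{2\pi i}\int_R P(s)\left(\int_{\R^2}M(p,s)\,\mathbf{L}\phi(p)\,d\mu_p\right)d\mu_s , \]
the Fubini interchange being legitimate because $M(\cdot,s)$ is locally integrable uniformly in $s$ (by the kernel estimates behind \eqref{alpha}, see \cite{CDM}), $\phi$ has compact support and $P\in L^q$. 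In the variable $p$, the kernel $M(p,s)$ is a meromorphic function of $Z(p)$ with poles exactly at $p\in s+\Z^2$, so $\mathbf{L}_p M(\cdot,s)=0$ off those poles; hence the inner integral can be evaluated by the Green's-theorem argument of Proposition \ref{testfunction} with the roles of the two slots of $M$ exchanged — excising small $Z$-discs about the poles, using $\mathbf{L}G\,d\mu=d(G\,dZ)$, the local expansion \eqref{Property1ofM} and the quasi-periodicity \eqref{Property2ofM} — and, collecting the constants exactly as there, it equals $-2\pi i\sum_{j,k\in\Z}\phi(s+(j,k))$. Substituting this, and unfolding the (locally finite) sum using the double periodicity of $P$ and the fact that $\{R+(j,k)\}_{j,k\in\Z}$ tiles $\R^2$,
\[ \langle\mathbf{L}T_\omega P,\phi\rangle=\int_R P(s)\sum_{j,k\in\Z}\phi(s+(j,k))\,d\mu_s=\sum_{j,k\in\Z}\int_{R+(j,k)}P(s)\phi(s)\,d\mu_s=\int_{\R^2}P(s)\phi(s)\,d\mu_s , \]
and since $\phi\in C^\infty_0(\R^2)$ was arbitrary, $\mathbf{L}T_\omega P=P$ in the sense of distributions.

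The only genuine work is in $(\imath\imath\imath)$. The main point is the ``transposed'' version of Proposition \ref{testfunction}, in which the differentiation falls on the first argument of $M$: it requires redoing the Green/residue computation and keeping careful track of the sign of the residue of $M(\cdot,s)$ along the diagonal and of the constants produced by the functional equations of $\Theta$. One also has to justify the Fubini interchange for the merely $L^q$ density $P$ against a kernel whose diagonal singularity, while integrable, degenerates on the characteristic set. The remaining manipulations — the quasi-periodicity formulas for $(\imath)$, $(\imath\imath)$ and the unfolding of the sum over the tiling — are routine.
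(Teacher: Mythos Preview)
Your proof is correct and follows essentially the same route as the paper: $(\imath)$ and $(\imath\imath)$ from the quasi-periodicity of the kernel (the paper simply cites \eqref{Property2ofM}, which encodes the same $\Theta'/\Theta$ functional equation you spell out), and $(\imath\imath\imath)$ by testing against $\phi\in C^\infty_0(\R^2)$, applying Fubini, and invoking Proposition~\ref{testfunction} with the integration carried over the first slot of $M$ rather than the second. The paper performs exactly this duality computation---your write-up is in fact more explicit than the paper's in flagging that the ``transposed'' form of Proposition~\ref{testfunction} is what is actually used, and in justifying the Fubini step.
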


\begin{proof}
Properties ($\imath$) and ($\imath\imath$) follow directly from (\ref{Property2ofM}). To verify  ($\imath\imath\imath$),
let $\phi\in C^\infty_0(\R^2)$. Then using Proposition \ref{testfunction} we find
\begeq\begin{array}{ll}
<\vfl T_\omega P,\, \phi> & =\dis -<T_\omega P,\, \vfl \phi>=-\int_{\R^2}T_\omega P(p)\vfl \phi(p)\, d\mu_p\\ \\
& =\dis\int_R\frac{-1}{2\pi i}\left[\int_{\R^2}M(p,s)\vfl\phi(p)d\mu_p\right]\,P(s)\, d\mu_s\\ \\
& =\dis\int_R\sum_{j,k\in\Z}\phi(s+(j,k))\, P(s)\, d\mu_s\\ \\
 &=\dis\int_R\sum_{j,k\in\Z}\phi(s+(j,k))\, P(s+(j,k))\, d\mu_s\\ \\
& =\dis\int_{\R^2}P(s)\phi(s)\, d\mu_s = <P,\phi>\,. \qed
\end{array}\stopeq
\end{proof}

\begin{theorem}\label{Lu=f}
For $f\in L^q(\T^2)$ with $q>2+\sigma$, equation $Lu=f$ has a solution
$u\in C^\alpha(\T^2)$ if and only if $\dis\int_{\T^2}f\, dxdy =0$.
\end{theorem}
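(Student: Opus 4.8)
The plan is to derive both implications almost directly from Theorem \ref{propertiesofT}, after transferring the problem to the universal cover $\R^2$.

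\emph{Necessity.} Suppose $u\in C^\alpha(\T^2)$ solves $Lu=f$. The key point is that $d\omega=0$ makes $L$ formally skew-adjoint on $\T^2$ with respect to $dx\,dy$: writing $L=b\pa_x-a\pa_y$ and integrating by parts on the boundaryless manifold $\T^2$, the terms carrying derivatives of the coefficients collect into the factor $a_y-b_x$, which vanishes because $d\omega=(b_x-a_y)\,dx\wedge dy=0$. Hence $\langle Lu,\phi\rangle=-\langle u,L\phi\rangle$ for every smooth $\phi$ on $\T^2$, and this identity needs only $u\in L^1(\T^2)$, so it is available for our (a priori merely distributional) solution. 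Taking $\phi\equiv 1$, for which $L\phi=0$, gives $\int_{\T^2}f\,dx\,dy=\langle Lu,1\rangle=0$. Equivalently, one can note that $f\,dx\wedge dy=du\wedge\omega=d(u\omega)$ since $d\omega=0$, whose integral over the closed surface $\T^2$ vanishes.

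\emph{Sufficiency.} Assume $\int_{\T^2}f\,dx\,dy=0$ and let $P=\Pi^\ast f$ be the pullback of $f$ to $\R^2$; then $P$ is doubly periodic, lies in $L^q$ on each period, and $\int_R P\,d\mu_s=\int_{\T^2}f\,dx\,dy=0$. Set $u_0=T_\omega P$ as in \eqref{operatorT}. By \eqref{alpha} we have $u_0\in C^\alpha(\R^2)$, and Theorem \ref{propertiesofT}(i)--(ii) gives
\[
u_0(x+1,y)=u_0(x,y),\qquad u_0(x,y+1)=u_0(x,y)-\int_R P\,d\mu_s=u_0(x,y),
\]
so $u_0$ is doubly periodic and therefore descends to a function $u\in C^\alpha(\T^2)$. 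Finally, $P$ is doubly periodic, so Theorem \ref{propertiesofT}(iii) yields $\vfl T_\omega P=P$ on $\R^2$; since $\vfl(\Pi^\ast\cdot)=\Pi^\ast(L\cdot)$ and $\Pi$ is a local diffeomorphism, pushing this periodic identity down through $\Pi$ gives $Lu=f$ on $\T^2$.

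There is essentially no obstacle left once Theorem \ref{propertiesofT} is in hand; the only points requiring a little care are the legitimacy of pairing the distribution $Lu$ with the constant function $1$ on the compact manifold $\T^2$ (which is automatic), and the exact bookkeeping $\int_R\Pi^\ast f\,d\mu_s=\int_{\T^2}f\,dx\,dy$, which is precisely what makes the ``defect'' term in Theorem \ref{propertiesofT}(ii) disappear under the stated hypothesis. Note also that the assumption $q>2+\sigma$ enters only through the regularity statement \eqref{alpha}, guaranteeing that the constructed solution lies in the advertised class $C^\alpha(\T^2)$.
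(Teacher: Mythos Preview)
Your proof is correct and follows essentially the same approach as the paper. For necessity, the paper writes the one-line identity $\int_{\T^2}f\,dxdy=\int_{\T^2}Lu\,dxdy=\int_{\T^2}d(u\,dZ)=0$, which is exactly your ``equivalently'' observation $f\,dx\wedge dy=d(u\omega)$; for sufficiency, the paper likewise sets $u=T_\omega\Pi^\ast f$ and invokes Theorem~\ref{propertiesofT} to get double periodicity and $\vfl u=\Pi^\ast f$.
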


\begin{proof}
If equation $Lu=f$ is solvable on $\T^2$,
\[
\int_{\T^2}f\, dxdy=\int_{\T^2}Lu\, dxdy=\int_{\T^2}d(udZ)=0.
\]
Conversely if $\dis\int_{\T^2}f\, dxdy=0$, then it follows from Theorem \ref{propertiesofT} that $T_\omega \Pi^\ast f$ is doubly
periodic and descends as a solution of $Lu=f$ on $\T^2$. \qed
\end{proof}

\section{The equation $Lu=Au$ on $\T^2$}\label{sec4}
We give a necessary and sufficient condition for the global solvability of the equation $Lu=Au.$ For $A\in L^q(\T^2)$ with $q>2+\si$, we associate the number
\begeq\label{nuofa}
\nu(A)=\frac{-1}{2\pi i}\int_{\T^2}A(p)\, d\mu_p =\frac{T_\omega \Pi^\ast A(0,1)-T_\omega\Pi^\ast A(0,0)}{2\pi i}\,.
\stopeq

\begin{theorem}\label{Lu=Au}
For a function $A\in L^q(\T^2)$ with $q>2+\si$, equation
\begeq\label{equationLu=Au}
Lu=Au
\stopeq
has a solution in $C^\alpha(\T^2)$ if and only if the associated number given by \eqref{nuofa} is in the
lattice generated by 1 and $\tau$:
\begeq\label{nuinlattice}
\nu(A)=j+k\tau\quad\textrm{with}\quad j,\, k\, \in\Z\,.
\stopeq
In this case any solution of \eqref{equationLu=Au} has the form
\begeq\label{solutionsofLu=Au}
u(p)=C\exp(T_\omega\Pi^\ast A (p)+kZ(p))\quad\textrm{with}\quad C\in\C\,.
\stopeq
\end{theorem}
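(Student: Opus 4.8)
The plan is to exploit the exponential substitution that turns the linear equation $Lu = Au$ into the inhomogeneous equation covered by Theorem \ref{Lu=f}, and then to use the double periodicity of $Z$ on $\T^2$ modulo the lattice to decide exactly when the ansatz descends to the torus. First I would establish the necessity of \eqref{nuinlattice}. If $u \in C^\alpha(\T^2)$ solves $Lu = Au$, then $u$ is nowhere zero: indeed near any point $u$ satisfies an equation of the form $\mathbf{L}u = Au$ with $A \in L^q$, and by the local solvability and the hypocomplexity of $\mathbf{L}$ (together with a similarity-principle argument, or simply the observation that $\log u$ makes sense locally once one knows the zero set is either empty or all of a neighborhood) one sees $u$ cannot vanish unless $u \equiv 0$; discarding the trivial solution we may assume $u$ never vanishes. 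Then locally $v := \log u$ is well defined up to $2\pi i \Z$, and satisfies $\mathbf{L}v = A$. On the universal cover $\R^2$, the function $w := T_\omega \Pi^\ast A$ also satisfies $\mathbf{L}w = \Pi^\ast A$ by Theorem \ref{propertiesofT}(iii), since $\Pi^\ast A$ is doubly periodic. Hence $\mathbf{L}(v - w) = 0$, so $v - w = H \circ Z$ for a holomorphic $H$ on $\C$; thus on $\R^2$ we have $\Pi^\ast u = \exp(w + H\circ Z)$.

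Next I would pin down $H$ using the quasi-periodicity relations. By Theorem \ref{propertiesofT}(i)-(ii), $w(x+1,y) = w(x,y)$ and $w(x,y+1) = w(x,y) - \int_R \Pi^\ast A\, d\mu = w(x,y) + 2\pi i\, \nu(A)$, while $Z(x+1,y) = Z(x,y)+1$ and $Z(x,y+1) = Z(x,y)+\tau$ by \eqref{Zquasiperiodic}. Since $\Pi^\ast u$ is doubly periodic, applying this to $\exp(w + H\circ Z)$ forces, for all $z$ in the image,
\[
H(z+1) = H(z) + 2\pi i\, m_1, \qquad H(z+\tau) = H(z) - 2\pi i\,\nu(A) + 2\pi i\, m_2
\]
for integers $m_1, m_2$ (independent of $z$ by continuity/connectedness). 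Differentiating, $H'$ is doubly periodic and entire, hence constant, say $H' \equiv c$; then $H(z) = cz + d$. Feeding this back gives $c = 2\pi i\, m_1$ and $c\tau = -2\pi i\,\nu(A) + 2\pi i\, m_2$, i.e. $\nu(A) = m_2 - m_1 \tau$, which is precisely membership of $\nu(A)$ in the lattice generated by $1$ and $\tau$. Writing $k := -m_1$ and absorbing $d$ into a constant, we obtain $\Pi^\ast u = C\exp(T_\omega \Pi^\ast A + kZ)$, which is \eqref{solutionsofLu=Au}.

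For the converse, suppose $\nu(A) = j + k\tau$ with $j,k\in\Z$. Set $U(p) := C\exp(T_\omega \Pi^\ast A(p) + kZ(p))$ on $\R^2$. A direct computation using $\mathbf{L}(T_\omega \Pi^\ast A) = \Pi^\ast A$ and $\mathbf{L}Z = 0$ shows $\mathbf{L}U = (\Pi^\ast A)\, U$. It remains to check $U$ is doubly periodic so that it descends to $\T^2$: using the transformation rules above,
\[
U(x+1,y) = C\exp(w + 2\pi i\cdot 0 + kZ + k) = \ei{k} U(x,y),
\]
\[
U(x,y+1) = C\exp(w + 2\pi i\,\nu(A) + kZ + k\tau) = \ei{2\pi i\,\nu(A) + k\tau} U(x,y).
\]
Hmm — this requires $\ei{k} = 1$ and $\ei{2\pi i \nu(A) + k\tau} = 1$, which is not automatic. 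The right normalization is instead to take $U(p) = C\exp\big(2\pi i(T_\omega \Pi^\ast A(p))/(2\pi i) + \ldots\big)$; more precisely one should replace the naive ansatz by $U(p) = C\exp\!\big(\widetilde{w}(p)\big)$ where $\widetilde w$ is adjusted by a linear-in-$Z$ term with the correct coefficient so that the two multipliers become $1$. Concretely, seek $U = C\exp(T_\omega \Pi^\ast A + \lambda Z)$ and demand periodicity: the $x$-period multiplier is $\ei{\lambda}$ and the $y$-period multiplier is $\ei{2\pi i\nu(A) + \lambda\tau}$; choosing $\lambda = 2\pi i\,n$ for $n \in \Z$ kills the first, and then the second reads $\ei{2\pi i\nu(A) + 2\pi i n\tau} = 1$, i.e. $\nu(A) + n\tau \in \Z$, which holds with $n = k$ precisely because $\nu(A) = j + k\tau$ gives $\nu(A) - k\tau = j \in \Z$ — wait, signs: take $\lambda = -2\pi i k$ so the second multiplier is $\ei{2\pi i(\nu(A) - k\tau)} = \ei{2\pi i j} = 1$ and the first is $\ei{-2\pi i k} = 1$. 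Thus $U = C\exp(T_\omega \Pi^\ast A - 2\pi i k\, Z)$ is doubly periodic and descends to a $C^\alpha$ solution on $\T^2$; re-indexing $k \mapsto$ the appropriate sign recovers the form stated in \eqref{solutionsofLu=Au}. The main obstacle is exactly this bookkeeping: being careful that "the lattice generated by $1$ and $\tau$" is used with the correct sign conventions so that the exponential multipliers collapse to $1$, and justifying that the holomorphic $H$ produced by hypocomplexity is forced to be affine (which follows from Liouville applied to the doubly periodic entire function $H'$).
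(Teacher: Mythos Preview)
Your overall strategy is sound, and for sufficiency it matches the paper's once the sign bookkeeping is sorted out (the paper simply sets $V = T_\omega\Pi^\ast A - 2\pi i k Z$ and checks directly that $\ei{V}$ is doubly periodic). For necessity, however, there is a real gap: you assert that a nontrivial solution $u$ of $Lu = Au$ is nowhere zero, but the justification you offer does not establish this. Hypocomplexity together with a local similarity principle only shows that $u$ is locally of the form $\ei{\phi}\, h(Z)$ with $h$ holomorphic, so zeros of $u$ are at worst \emph{isolated} --- not absent. If isolated zeros were present, $\R^2$ minus the discrete preimage would no longer be simply connected, so a global branch of $\log \Pi^\ast u$ need not exist, and your derivation of the additive relations $H(z+1)=H(z)+2\pi i m_1$, $H(z+\tau)=H(z)-2\pi i\nu(A)+2\pi i m_2$ breaks down. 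That nontrivial global solutions on $\T^2$ are nowhere zero is in fact a \emph{consequence} of the theorem (the final form is $C\exp(\cdots)$), not something available at the outset.

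The paper sidesteps this by working multiplicatively: it sets $V=\Pi^\ast u\cdot\ei{-T_\omega\Pi^\ast A}$, which is defined everywhere regardless of any zeros of $u$, satisfies $\mathbf{L}V=0$, and hence equals $H\circ Z$ with $H$ entire. Periodicity of $\Pi^\ast u$ and Theorem~\ref{propertiesofT} then give the \emph{multiplicative} relations $H(z+1)=H(z)$ and $H(z+\tau)=\ei{-2\pi i\nu(A)}H(z)$. To classify such $H$, the paper passes to the cylinder variable $\zeta=\ei{2\pi i z}$, writes $H(z)=K(\zeta)$ with $K$ holomorphic in $\C^\ast$, and compares Laurent coefficients in $K(\zeta\,\ei{2\pi i\tau})=\ei{-2\pi i\nu(A)}K(\zeta)$: since $\textrm{Im}(\tau)>0$ the moduli $|\ei{2\pi i m\tau}|$ are all distinct, so at most one coefficient $a_m$ survives, forcing $H(z)=C\,\ei{2\pi i k z}$ and $\nu(A)\in\Z+\Z\tau$. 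Your Liouville-on-$H'$ idea is a pleasant alternative, but it requires the \emph{additive} relations for $H$, which you can only obtain after taking a logarithm --- so it presupposes the very nonvanishing you have not proved. The clean repair is to adopt the paper's multiplicative $V$; the Laurent argument then simultaneously yields the lattice condition, the explicit form of $u$, and (a posteriori) the nonvanishing.
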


\begin{proof}
Suppose that $\nu(A)$ is given by (\ref{nuinlattice}). The function $V\in C^\alpha(\R^2)$ given by
\[
V(x,y)=T_\omega \Pi^\ast A(x,y)-2\pi i kZ(x,y)
\]
satisfies $\vfl V=\Pi^\ast A$ by Theorem \ref{propertiesofT}, and by (\ref{nuinlattice}) it satisfies
\[
V(x+1,y)=V(x,y)-2\pi i k\ \ \textrm{and}\ \ V(x,y+1)=V(x,y)+2\pi i j\,.
\]
Hence $u(x,y)=\ei{V(x,y)}$ is doubly periodic and satisfies \eqref{equationLu=Au}.

To prove the necessity of (\ref{nuinlattice}), suppose equation (\ref{equationLu=Au}) has a solution
in $\T^2$ (note that in this case the solution is necessarily H\"{o}lder continuous by results contained in
\cite{CDM}). Then the function
\[
V(x,y)=\Pi^\ast u(x,y)\ei{-T_\omega\Pi^\ast A(x,y)}
\]
satisfies $\mathbf{L}V=0$ in $\R^2$. Hence there exists an entire function $H$ such that
$V=H\circ Z$. Furthermore, if $z=Z(x,y)$, then it follows from Theorem \ref{propertiesofT} that
\begeq\label{propofH}\left\{\begin{array}{ll}
H(z+1) &= H(z)\\
H(z+\tau) &=\dis \Pi^\ast u(x,y)\ei{-T_\omega\Pi^\ast A(x,y) -2i\pi \nu(A)}=\ei{ -2i\pi \nu(A)}H(z)\,.
\end{array}\right.\stopeq
It follows from (\ref{propofH}) that $H$ can factored through a function defined on the cylinder. That is,
H can be written as
\[
H(z)=K(\zeta)\quad\textrm{with}\quad \zeta=\ei{2\pi iz}\,
\]
where $K$ is a holomorphic function in the punctured plane $\C^\ast$.
Moreover, $K$ satisfies
\begeq\label{propofK}
K(\zeta\ei{2\pi i\tau})=H(z+\tau)=\ei{ -2i\pi \nu(A)}K(\zeta)\,.
\stopeq
Consider the Laurent series of $K$: $K(\zeta)=\dis\sum_{m\in\Z}a_m\zeta^m$. It follows at once from
(\ref{propofK}) that
\begeq\label{Laurentcoeff}
a_m\ei{2i\pi m\tau}=a_m\ei{ -2i\pi \nu(A)}\,,\qquad\forall m\in\Z\,.
\stopeq
Recall that $\textrm{Im}(\tau)>0$ so that $\ei{2i\pi m\tau}\ne 1$ for all $m$. Hence, system (\ref{Laurentcoeff}) has
a solution if and only if $\nu(A)=j+k\tau$ for some $j,\, k\in\Z$ and in this case
$K(\zeta )=a_k\zeta^k$. The function $\Pi^\ast u$ is therefore
\[
\Pi^\ast u(x,y)=a_k\ei{T_\omega\Pi^\ast A(x,y)+2\pi ikZ(x,y)}\,.\quad \qed
\]
\end{proof}

\section{The equation $Lu=Au+B\overline u$ on $\mathbb T^2$}\label{sec5}

In this section we give a necessary and sufficient condition for the solvability of the equation
\begin{equation}\label{Lu=Au+Bbaru}
Lu=Au+B\overline u
\end{equation}
 on $\mathbb T^2$ and
deduce a similarity principle with the solutions of $Lu=0$ on $\mathbb T^2$  (which are in fact constant functions).
Let $A,\, B\in L^q(\mathbb T^2)$, with $q>2+\sigma$ where $\sigma$ is given in (\ref{sigma}).
For $k\in\mathbb{Z}$, define the operator $P_k$ by
\[
P_kv(x,y)=T_{\omega}\left[\Pi^\ast A+\tilde{B}_k\,\cdot\,\frac{\overline{e^v}}{e^v}\right](x,y)\,,
\]
where
\[
\tilde{B}_k (x,y) = \Pi^\ast B (x,y)\, \exp \left[ -2\pi i k (\overline{Z}(x,y)+Z(x,y))\right]\, .
\]
It follows from \cite{CDM} and property \eqref{Property1ofM} of $M$ that if $v\in L^q(\mathbb{R}^2)$ with
$q>2+\sigma$, then $P_kv\in C^\alpha (R)$ with $\alpha$ given in \eqref{alpha}. We restrict the action of $P_k$ to the subspace
$C(R)$.

\begin{proposition}\label{fixedpoint}
The operator $P_k$ has a fixed point in $C(R)$.
\end{proposition}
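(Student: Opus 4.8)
The plan is to apply Schauder's fixed point theorem to $P_k$ on a suitable compact convex subset of $C(R)$. The decisive structural remark is that $\overline{e^v}/e^v = e^{\overline v - v}$ has modulus $1$ at every point, because $\overline v - v$ is purely imaginary; similarly $\abs{\tilde B_k} = \abs{\Pi^\ast B}$, since $\overline Z + Z = 2\,\mathrm{Re}\,Z$ is real and the exponential factor defining $\tilde B_k$ therefore has modulus $1$. Consequently, for every $v\in C(R)$ the function
\[
g_v := \Pi^\ast A + \tilde B_k\cdot\frac{\overline{e^v}}{e^v}
\]
obeys the pointwise bound $\abs{g_v}\leqslant \abs{\Pi^\ast A} + \abs{\Pi^\ast B}$ on $R$, hence $\|g_v\|_{L^q(R)}\leqslant \|\Pi^\ast A\|_{L^q(R)} + \|\Pi^\ast B\|_{L^q(R)} =: C_0$, a bound independent of $v$. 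Since $T_\omega$ is bounded from $L^q(R)$ into $C^\alpha(R)$ (see \cite{CDM}, together with property \eqref{Property1ofM} and estimate \eqref{alpha}), say $\| T_\omega g\|_{C^\alpha(R)}\leqslant C_1\|g\|_{L^q(R)}$, we obtain $\|P_k v\|_{C^\alpha(R)} = \|T_\omega g_v\|_{C^\alpha(R)}\leqslant C_1 C_0 =: M$ for every $v\in C(R)$.

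Let $\mathcal K\subset C(R)$ be the closed ball of radius $M$ in $C^\alpha(R)$. It is convex; it is precompact in $C(R)$ by the Arzel\`a--Ascoli theorem, since a $C^\alpha$-bounded family on the compact set $R$ is uniformly bounded and equicontinuous; and it is closed in $C(R)$, because the $C^\alpha$-norm is lower semicontinuous under uniform convergence. Thus $\mathcal K$ is compact and convex, and by the estimate above $P_k$ maps $C(R)$, in particular $\mathcal K$, into $\mathcal K$.

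It remains to verify that $P_k$ is continuous on $\mathcal K$ for the $C(R)$-topology. If $v_n\to v$ uniformly on $R$, then the $v_n$ stay in a fixed bounded region of $\C$, on which $w\mapsto e^{\overline w - w}$ is Lipschitz; hence $\overline{e^{v_n}}/e^{v_n}\to \overline{e^v}/e^v$ uniformly on $R$, so $g_{v_n}\to g_v$ uniformly, and therefore in $L^q(R)$ since $R$ has finite measure. Continuity of $T_\omega\colon L^q(R)\to C^\alpha(R)\hookrightarrow C(R)$ then gives $P_k v_n = T_\omega g_{v_n}\to T_\omega g_v = P_k v$ in $C(R)$. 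Schauder's fixed point theorem now provides $v\in\mathcal K\subset C(R)$ with $P_k v = v$.

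The computations involved are routine; the only point requiring genuine care is the continuity of the map $v\mapsto g_v$, where one must exploit that uniform convergence on the compact set $R$ confines the $v_n$ to a fixed bounded subset of $\C$, so that $w\mapsto e^{\overline w - w}$ may be treated as uniformly Lipschitz there. After that, the argument is just the standard assembly of the a priori $C^\alpha$-estimate from \cite{CDM} into the Arzel\`a--Ascoli / Schauder scheme.
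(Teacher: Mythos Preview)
Your argument is correct and follows the same Schauder fixed-point scheme as the paper: bound $\|g_v\|_{L^q}$ uniformly in $v$ using $|e^{\bar v - v}|=1$ and $|\tilde B_k|=|\Pi^\ast B|$, invoke the $L^q\to C^\alpha$ boundedness of $T_\omega$, check continuity via the Lipschitz property of $w\mapsto e^{\bar w - w}$, and apply Schauder. The one substantive difference is the choice of invariant set: the paper takes the sup-norm ball $K=\{v\in C(R):\|v\|_\infty\le C\}$ and asserts it is compact in $C(R)$, which it is not; you instead work with the $C^\alpha$-ball, which \emph{is} compact in $C(R)$ by Arzel\`a--Ascoli and the lower semicontinuity of the H\"older norm. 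That is the correct way to close the argument, so your version is actually more careful on this point. (As a minor simplification, $w\mapsto e^{\bar w - w}=e^{-2i\,\mathrm{Im}\,w}$ is globally Lipschitz with constant $2$, as the paper notes, so confining the $v_n$ to a bounded region is unnecessary.)
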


\begin{proof} It follows from  \cite[Theorem 9]{CDM}  that there exists $M>0$ such that
\[
|T_{\omega}F(x,y)|\,\leqslant\,  M\|F\|_q\,,\quad \forall F\in L^q(R) \ \mathrm{and} \ (x,y)\in R\, .
\]
Hence
\[
\| P_k v\|_\infty \,\leqslant\, M\, (\|A\|_q+\|B\|_q)\, \doteq C\, \quad \forall v\in V.
\]
Consider the subset $K$ given by
\[
K=\{\, v\in C(R);\ \|v\|_\infty \leqslant C\,\}\, .
\]
$K$ is a compact and convex subset in $C(R)$. For every $v\in K$ we have
\[
\|P_k(v)\|_\infty \,\leqslant\, M\, (\|\Pi^\ast A\|_q+\|\tilde{B}_k\, \exp(\bar{v}-v)\|_q)\,
\leqslant\, M\, (\|A\|_q+\|B\|_q)\, \doteq C\,.
\]
Hence $P_k(K)\subset K$. Furthermore $P_k:\, C(R)\, \longrightarrow\, C(R)$ is continuous.
Indeed, since the function $g(\zeta)=\exp(\ov{\zeta}-\zeta)$ is Lipschitz (with constant 2) in $\C$,
then for $v,\, v_0\, \in C(R)$, we have
\[\begin{array}{ll}
\|P_k(v)-P_k(v_0)\|_\infty & =
\,  \left\lVert T_{\omega}\left[\tilde{B}_k\cdot\left(\exp (\bar{v}-v)-\exp (\bar{v}_0-v_0) \right)\right]\right\rVert_{\infty}\\ \\
 & \leqslant\, M \|\tilde{B}_k\|_q\, \left\lVert \exp (\bar{v}-v)-\exp (\bar{v}_0-v_0) \right\rVert_{\infty}\\ \\
  & \leqslant\, 2 M \|\tilde{B}_k\|_q\, \|v-v_0 \|_\infty\, .
\end{array}\]
Thus $P_k$ has a fixed point in $K$ (Schauder's Fixed Point Theorem).  \qed
\end{proof}
\bigskip

Note as in that in Theorem \ref{propertiesofT}, for all $x,\, y\, \in [0, 1]$, $P_k$ satisfies
\begin{equation}\label{PropertiesofPk}
P_kv(1,y)=P_kv(0,y) \ \mathrm{and}\
P_kv(x,1)=P_kv(x,0) -\!\!\int_R\!\!\! \,\left(\Pi^\ast A+\tilde{B}_k\cdot\frac{\overline{e^v}}{e^v}\right)\, d\mu\,.
\end{equation}
Let $V_k$ be the set of fixed points of $P_k$: $\displaystyle V_k=\{\, v\in C(R),\ P_kv=v\, \}$.
Hence, for every $v\in V_k$, there is $\nu\in \C$ such that
 $v(1,y)=v(0,y)$ and $v(x,1)-v(x, 0)=\nu$. Let
\[ \Lambda_k\doteq\{\, v(x,1)-v(x,0): v\in V_k\}\,.\]

\medskip

\begin{theorem}\label{SolvabilityofLu=Au+Bbaru}
Equation \eqref{Lu=Au+Bbaru} has a H{\"o}lder continuous solution  on $\mathbb T^2$ if and only if
there are $j,k\in \mathbb Z$ such that $2\pi i(j-k\tau)\in \Lambda_k.$
Moreover any solution $u$ is such that
\[
 \Pi^\ast u(x,y)=C\exp \left(2\pi ik Z(x,y) + P_k v(x,y)\right )\, ,
\]
with $C\in\mathbb C$, and $v \in V_k$.
\end{theorem}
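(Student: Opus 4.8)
The plan is to follow the pattern of the proof of Theorem \ref{Lu=Au}, but now threading in the fixed-point structure of the operator $P_k$ to absorb the semilinear term $B\ov u$. The idea is that a solution $u$ of \eqref{Lu=Au+Bbaru} on $\T^2$ should correspond, after pulling back to $\R^2$ and taking a logarithm, to a function of the form $P_kv + 2\pi i k Z$ for an appropriate $v\in V_k$ and integer $k$; the role of $P_k$ is precisely to produce a primitive of $\Pi^\ast A + \tilde B_k\cdot\ov{e^v}/e^v$, which (once the twist $e^{-2\pi i k(\ov Z+Z)}$ in $\tilde B_k$ is unwound) is the correct right-hand side for an equation of the form $\mathbf{L}V = \Pi^\ast A + \Pi^\ast B\,\ov e^{V}/e^{V}$, i.e. $\mathbf{L}(e^V) = (\Pi^\ast A) e^V + (\Pi^\ast B)\ov{e^V}$.

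\medskip

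For the sufficiency direction, suppose $2\pi i(j-k\tau)\in\Lambda_k$ for some $j,k\in\Z$, so there is $v\in V_k$ with $v(x,1)-v(x,0)=2\pi i(j-k\tau)$ and $v(1,y)=v(0,y)$. Set $V(x,y) = 2\pi i k Z(x,y) + P_kv(x,y)$ and $U = e^V$. First I would check, using Theorem \ref{propertiesofT}(iii) applied to the (doubly periodic, once $v=P_kv$ and the $\tilde B_k$ twist is accounted for) function $\Pi^\ast A + \tilde B_k\cdot\ov{e^v}/e^v$, that $\mathbf{L}V = 2\pi i k\,\mathbf{L}Z + \Pi^\ast A + \tilde B_k\,\ov{e^v}/e^v = \Pi^\ast A + \Pi^\ast B\,\ov{e^V}/e^V$; here the key computation is that $\ov{e^V}/e^V = e^{\ov V - V}$, that $\ov V - V = -4\pi i k\,\mathrm{Re}\,Z + (\ov{P_kv}-P_kv)$ — wait, more precisely $\ov V - V = -2\pi i k(\ov Z + Z) + (\ov{P_kv} - P_kv)$ since $\ov{2\pi i k Z} = -2\pi i k\ov Z$ — and that this exactly converts $\tilde B_k$ back into $\Pi^\ast B$ via the defining exponential factor of $\tilde B_k$, using $v = P_kv$. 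Then $\mathbf{L}U = U\,\mathbf{L}V = (\Pi^\ast A)U + (\Pi^\ast B)\ov U$. Next I would verify $U$ is doubly periodic: periodicity in $x$ follows from \eqref{PropertiesofPk} and $Z(x+1,y)=Z(x,y)+1$ together with $v(1,y)=v(0,y)$, giving $V(x+1,y) = V(x,y) + 2\pi i k$, hence $U(x+1,y)=U(x,y)$; periodicity in $y$ follows from \eqref{PropertiesofPk}, $Z(x,y+1)=Z(x,y)+\tau$, and $v(x,1)-v(x,0)=2\pi i(j-k\tau)$: the total increment of $V$ in $y$ is $2\pi i k\tau - \int_R(\cdots) + 2\pi i(j-k\tau)$, and one checks the integral term combines with the $v$-increment constraint so that $V(x,y+1) = V(x,y) + 2\pi i j$, whence $U(x,y+1)=U(x,y)$. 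So $U$ descends to $\T^2$ as a Hölder continuous nonvanishing solution.

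\medskip

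For the necessity direction and the normal form, suppose $u$ solves \eqref{Lu=Au+Bbaru} on $\T^2$ (Hölder continuity being automatic by \cite{CDM}). Here the main obstacle — and the step I expect to require care — is that $u$ may vanish, so one cannot immediately write $u = e^{\text{something}}$; I would need the similarity-principle-type observation that a continuous solution of $Lu = Au + B\ov u$ cannot vanish on a nonempty open set and in fact (by the local similarity principle from \cite{CDM}, or by the structure developed here) can be written locally as $e^w\cdot(\text{holomorphic in }Z)$, and then argue globally that in fact $u$ is everywhere nonzero on $\T^2$, so that $\log\Pi^\ast u$ has a single-valued branch up to periods. Granting that, set $w = \Pi^\ast u / e^{P_k v}$ for the candidate $v$; the delicate point is choosing $k$ and $v\in V_k$ so that $w$ solves $\mathbf{L}w = 0$. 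One sets $v$ to be determined implicitly by $e^v = \ov{\Pi^\ast u}/\Pi^\ast u$ up to the $\tilde B_k$ twist — i.e. one seeks $v$ with $\ov{e^v}/e^v = \ov{\Pi^\ast u}/\Pi^\ast u \cdot e^{2\pi i k(\ov Z + Z)}\cdot(\text{correction})$ — and checks this $v$ is a fixed point of $P_k$ by the uniqueness/consistency of primitives (two functions with the same $\mathbf{L}$-derivative differing by a $Z$-holomorphic function, which periodicity forces to be constant, absorbed into $C$). Then $\mathbf{L}w = 0$ gives $w = H\circ Z$ with $H$ entire, and exactly as in Theorem \ref{Lu=Au} the double periodicity of $\Pi^\ast u$ together with properties (i)--(ii) of Theorem \ref{propertiesofT} and the quasiperiodicity \eqref{Zquasiperiodic} force $H(z+1)=H(z)$ and $H(z+\tau) = e^{-2\pi i \nu}H(z)$ for the relevant period $\nu$; passing to $K(\zeta) = H(z)$ with $\zeta = e^{2\pi i z}$ and comparing Laurent coefficients as before shows $H(z) = a_k e^{2\pi i k z}$ for a single $k$, which both pins down $k$ and forces the compatibility condition $2\pi i(j - k\tau)\in\Lambda_k$, and yields $\Pi^\ast u = C\exp(2\pi i k Z + P_k v)$ with $C = a_k$. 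The hardest part is genuinely the nonvanishing of $u$ and the self-consistent selection of the fixed point $v\in V_k$; the periodicity bookkeeping and the Laurent-series argument are then routine adaptations of Section \ref{sec4}.
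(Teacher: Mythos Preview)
Your sufficiency argument matches the paper's: set $U=\exp(2\pi ikZ+P_kv)$, verify $\mathbf{L}U=(\Pi^\ast A)U+(\Pi^\ast B)\overline{U}$ using $P_kv=v$ and the defining twist in $\tilde B_k$, and check double periodicity from \eqref{PropertiesofPk} together with the hypothesis on $v(x,1)-v(x,0)$.

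For necessity you have made the problem harder than it is, and the step you single out as hardest --- establishing a priori that $u$ is nowhere zero --- is precisely what the paper does \emph{not} do up front; nonvanishing comes out as a \emph{consequence} of the representation (the Remark after the theorem). The paper's route is: since $L$ is elliptic on $\mathbb{T}^2\setminus\Sigma$, the zeros of a nontrivial solution $u$ are isolated there, so $\overline{u}/u$ is defined almost everywhere and lies in $L^\infty(\mathbb{T}^2)$. One then sets directly
\[
V \;=\; T_\omega\!\left[\Pi^\ast A \;+\; \Pi^\ast B\cdot \Pi^\ast\!\left(\frac{\overline{u}}{u}\right)\right],
\]
which is well defined without any global nonvanishing of $u$. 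Theorem~\ref{propertiesofT} gives $\mathbf{L}V=\Pi^\ast A+\Pi^\ast B\cdot\Pi^\ast(\overline{u}/u)$, hence $\mathbf{L}(\Pi^\ast u\cdot e^{-V})=0$, so $\Pi^\ast u\cdot e^{-V}=H\circ Z$ with $H$ entire. From here your Laurent argument from Section~\ref{sec4} applies verbatim: $H(z+1)=H(z)$ and $H(z+\tau)=e^{\beta}H(z)$ force $H(z)=Ce^{2\pi ikz}$ and the lattice condition on $\beta$, giving $\Pi^\ast u=C\exp(2\pi ikZ+V)$ and, a posteriori, $u\ne 0$ everywhere.

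Your implicit construction of $v\in V_k$ from $\overline{\Pi^\ast u}/\Pi^\ast u$ is really this same $V$ in disguise: once $\Pi^\ast u=Ce^{2\pi ikZ+V}$, one has $\Pi^\ast(\overline{u}/u)=(\overline{C}/C)\,e^{-2\pi ik(\overline{Z}+Z)}\,\overline{e^V}/e^V$, so $V$ satisfies the fixed-point equation for $P_k$ up to the unimodular constant $\overline{C}/C$, and the period $V(x,1)-V(x,0)=-\beta$ lands in $\Lambda_k$. Trying to reach this by first taking a logarithm of $u$ is the detour that created your gap; defining $V$ through $T_\omega$ applied to the bounded integrand $\Pi^\ast A+\Pi^\ast B\cdot\Pi^\ast(\overline{u}/u)$ avoids it entirely.
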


\medskip

\begin{proof} Suppose that there is $v\in V_k$ with $v(x,1)-v(x,0)=2\pi i(j-k\tau)$, for some $j,k\in\mathbb Z$. Let
\[
U(x,y) = \exp\left[2\pi i k Z(x,y)+ P_k v(x,y)\right]\,,\quad  (x,y)\in \mathbb R^2\, .
\]
It follows from property (\ref{PropertiesofPk}) and assumption on $v$ that $U$ is doubly periodic, and, as $P_kv=v$,
we have
\[\begin{array}{ll}
\vfl(U) & =\dis  U\vfl(P_kv)=U\vfl\left(T_{\omega}\left[\Pi^\ast A+ \tilde{B}_k\cdot\frac{\overline{e^v}}{e^v}\right]\right)\\ \\
& =\dis U\left(\Pi^\ast A+\Pi^\ast B\cdot\frac{\overline U}{U} \right)=\Pi^\ast A \cdot U+\Pi^\ast B \cdot {\overline U}\,.
\end{array}\]
Since $U$ is doubly periodic then $u=U\circ\Pi^{-1}\,\in C^\alpha(\mathbb{T}^2)$ satisfies $Lu=Au+\overline{u}$.

Conversely, suppose that $u\in C^\alpha(\mathbb{T}^2)$ solves \eqref{Lu=Au+Bbaru}. Since $L$ is elliptic on
$\mathbb{T}^2\backslash\Sigma$, it follows that the zeros of $u$ are isolated on $\mathbb{T}^2\backslash\Sigma$ and
the function $\dis\frac{\overline{u}}{u}\, \in L^\infty (\mathbb{T}^2)$.
Let
\[
V(x,y)=T_\omega \left[ \Pi^\ast A +\Pi^\ast B\, \Pi^\ast \left(\frac{\overline{u}}{u}\right)\right]\, .
\]
Theorem \ref{propertiesofT} implies that
\[
\vfl (V)= \Pi^\ast A +\Pi^\ast B\, \Pi^\ast\left(\frac{\overline{u}}{u}\right)
\]
and there exists $\beta\in\C$ such that
\begin{equation}\label{V-function}
V(x,y)=V(x+1,y)\quad\mathrm{and}\quad V(x,y)=V(x,y+1)+\beta,\quad \forall (x,y)\in\R^2\, .
\end{equation}
We have
\[
\vfl (\Pi^\ast u \, \ei{-V})=0\,
\]
in $\R^2$. Therefore, there exists an entire function $H$ in $\C$ such that
\[
\Pi^\ast u(x,y) \ei{-V(x,y)}=H(Z(x,y))\qquad\forall (x,y)\in\R^2\, .
\]
Moreover the double periodicity of $\Pi^\ast u$ and property \eqref{V-function} imply that the
entire function $H$ satisfies
\[
H(z+1)=H(z)\ \mathrm{ and }\ H(z+\tau)=e^{\beta}H(z)\,,\quad \forall z\in \C\, .
\]
As in the previous section, such an entire function $H$ is of the form $H(z)=Ce^{2\pi ikz}$ with $C\in \C$ and
$\beta=2\pi i(j+k\tau)$ for some $j,k\, \in\Z$. This completes the proof. \qed
\end{proof}
\begin{remark}
In particular, we have showed that a solution to $Lu=Au+B\overline u$ globally defined on $\mathbb T^2$ never vanishes if it is not identically zero.
\end{remark}


\end{document}